\newtheorem{thm}{Theorem}[section]
\newtheorem{lem}{Lemma}[section]
\newcommand{\N}{\mathbb{N}}
\newcommand{\Z}{\mathbb{Z}}
\newcommand{\C}{\mathbb{C}}
\title{Average Orders of the Euler Phi Function, The Dedekind Psi Function, The Sum of Divisors Function, And The Largest Integer Function}
\date{}
\author{N. A. Carella}
\begin{document}
\thispagestyle{empty}
\date{}

\maketitle
\textbf{\textit{Abstract}:} Let $ x\geq 1 $ be a large number, let $ [x]=x-\{x\} $ be the largest integer function, and let $ \varphi(n)$ be the Euler totient function. The result $ \sum_{n\leq x}\varphi([x/n])=(6/\pi^2)x\log x+O\left (  x(\log x)^{2/3}(\log\log  x)^{1/3}\right ) $ was proved very recently. This note presents a short elementary proof, and sharpen the error term to $ \sum_{n\leq x}\varphi([x/n])=(6/\pi^2)x\log x+O(x) $. In addition, the first proofs of the asymptotics formulas for the finite sums $ \sum_{n\leq x}\psi([x/n])=(15/\pi^2)x\log x+O(x\log \log x) $, and $ \sum_{n\leq x}\sigma([x/n])=(\pi^2/6)x\log x+O(x \log \log x) $ are also evaluated here. 
\let\thefootnote\relax\footnote{\today  \\
\textit{MSC2020}: Primary 11N37, Secondary 11N05. \\
\textit{Keywords}: Arithmetic function; Euler phi function; Dedekind psi function, Sum of divisors function; Average orders.}

\tableofcontents
\newpage
\section{Introduction} \label{s0800}
Some new analytic techniques for evaluating the \textit{fractional} finite sums $\sum_{n\leq x}f\left ([x/n]\right )$ for slow growing functions $ f(n)\ll n^{\varepsilon} $, were recently introduced in \cite{BS18}. Subsequently, the some analytic techniques for faster growing functions were introduced in the more recent literature as \cite{ZW20}. In this note, the standard analytic techniques originally developed for evaluating the average orders $ \sum_{n\leq x}f(n) $ of arithmetic functions are modified to handle the fractional finite sums $ \sum_{n\leq x}f([x/n]) $ for multiplicative functions defined by Dirichlet convolutions $f(n)=\sum_{d\mid n}g(n/d)$, where $f,g: \N\longrightarrow \C$, and the fast rates of growth approximately $ f(n)\gg n(\log n)^b $, for some $ b\in \Z $. The modified standard techniques are simpler, more efficient and produce very short proofs. \\

As demonstrations, the fractional finite sum $ \sum_{n\leq x}\varphi([x/n]) $ of the Euler phi function $  \varphi$ in Theorem \ref{thm0800.001}, the fractional finite sum $ \sum_{n\leq x}\psi([x/n]) $ of the Dedekind psi function $  \psi$ in Theorem \ref{thm0300.010}, and the fractional finite sum $ \sum_{n\leq x}\sigma([x/n]) $ of the sum of divisors function $  \sigma$ in Theorem \ref{thm0500.010}, are evaluated here. The three functions $ \varphi(n)\leq\psi(n)\leq \sigma(n) $ share many similarities such as multiplicative structures, rates of growths, are defined by some Dirichlet convolutions $f(n)=\sum_{d\mid n}g(n/d)$, et cetera, and have similar proofs. Theorem \ref{thm0800.001} has a very short proof, and sharpen the error term of a very recent result $\sum_{n\leq x}\varphi\left ([x/n]\right )=(6/\pi^2)x\log x+O\left (  x(\log x)^{2/3}(\log\log  x)^{1/3}\right )$ proved in \cite{ZW20} using a very complicated and lengthy proof. Further, Theorem \ref{thm0300.010}, and Theorem \ref{thm0500.010} are new results in the literature.

\section{Euler Totient Function}\label{s0801}
The Euler totient function $ \varphi(n)=n\sum_{d\mid n}\mu(d)/d $ is multiplicative and satisfies the growth condition $ \varphi(n)\gg n/\log \log n $. A very short proof for $\sum_{n\leq x}\varphi([x/n] )  $ is produced here. It is a modified version of the standard proof for the average order $ \sum_{n\leq x}\varphi(n)= (3/\pi^2)x^2+O\left (  x\log x\right )$, which appears in \cite[Theorem 3.7]{AP76}, and similar references.\\
 
\begin{thm}\label{thm0800.001} If $ x\geq 1 $ is a large number, then, 
\begin{equation}\label{eq0800.001}
\sum_{n\leq x}\varphi\left (\left [\frac{x}{n}\right ]\right )= \frac{6}{\pi^2}x\log x+O\left (   x\right ).
\end{equation}
\end{thm}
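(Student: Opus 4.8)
The plan is to start from the standard formula $\varphi(m) = \sum_{d \mid m} \mu(d) \tfrac{m}{d}$ and substitute it into the sum, writing $\varphi([x/n])$ in terms of the divisor structure of $[x/n]$. The key observation that makes the fractional sum tractable is the classical identity $[[x/n]/d] = [x/(nd)]$, which lets me collapse the nested floor functions. Writing $m = [x/n]$ and expanding gives

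\begin{equation}
\sum_{n \leq x} \varphi\!\left(\left[\frac{x}{n}\right]\right) = \sum_{n \leq x} \sum_{d \mid [x/n]} \mu(d)\, \frac{[x/n]}{d}.
\end{equation}

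The obstacle with this form is that $d$ ranges over divisors of $[x/n]$, which is awkward. Instead I would introduce the representation $\varphi(m) = \sum_{de = m} \mu(d)\, e$ and sum over all pairs, but the cleaner route is to use $\varphi = \mu * \mathrm{Id}$ only after a Dirichlet-type rearrangement. So first I would write each $\varphi([x/n]) = \sum_{d e = [x/n]} \mu(d) e$ and swap the order of summation to sum over $d$ first.

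Next I would exploit the crucial hyperbola-type manipulation. Introducing a new variable, I would replace the inner divisor sum using $\sum_{d \mid m} \mu(d) \lfloor m/d \rfloor$ -- but since here $m = [x/n]$ is itself a floor, the trick is to write the double sum as ranging over $n$ and an auxiliary $d$, and apply $\lfloor \lfloor x/n\rfloor / d \rfloor = \lfloor x/(nd)\rfloor$. Concretely, I expect to arrive at a double sum roughly of the shape $\sum_{n\leq x}\sum_{d} \mu(d)\,\bigl[\tfrac{x}{nd}\bigr]$, where the floor identity has been used to fold the two floors into one. Setting $k = nd$ and collecting terms, this becomes a sum over $k \leq x$ of $[x/k]$ weighted by $\sum_{d \mid k}\mu(d)\cdot(\text{something})$, which I anticipate simplifies to a weight involving $\varphi$ or $\mu$ again. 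The main engine will then be the elementary estimate $\sum_{k \leq x} \frac{[x/k]}{k} \sim \frac{\pi^2}{6}$-type constant combined with $\sum_{k\leq x}\frac{1}{k} = \log x + O(1)$, producing the leading term $\frac{6}{\pi^2} x \log x$.

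The main obstacle I foresee is controlling the error term tightly enough to get $O(x)$ rather than the weaker $O(x(\log x)^{2/3}(\log\log x)^{1/3})$ from \cite{ZW20}. The gain should come precisely from the floor identity: because $\lfloor \lfloor x/n \rfloor / d\rfloor = \lfloor x/(nd)\rfloor$ is an \emph{exact} identity rather than an approximation, I avoid accumulating the $\{x/n\}$ fractional-part errors that a naive bound would incur, and each application of $[y] = y + O(1)$ contributes only a bounded error per term. The bookkeeping will require summing the $\sum_d \mu(d)$ tails against $\sum_{n} 1/n$, and the decisive step is showing that after using $\sum_{d>y}\mu(d)/d^2 = O(1/y)$ and $\sum_{n\leq x} 1 = O(x)$, all secondary terms are genuinely $O(x)$ with no residual logarithm. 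I would verify that the constant $6/\pi^2 = 1/\zeta(2)$ emerges from $\sum_{d\geq 1}\mu(d)/d^2$, confirming the leading coefficient, and then the remaining careful estimate of the error is routine but must be done without dropping a $\log x$ factor.
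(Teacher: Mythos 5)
There is a genuine gap, and it sits exactly at the step you flag as the key trick. The identity $[[x/n]/d]=[x/(nd)]$ is true, but it only rewrites the \emph{summand}: since $[x/n]/d=[x/(nd)]$ whenever $d\mid[x/n]$, your expansion gives
\begin{equation*}
\sum_{n\leq x}\varphi\left(\left[\frac{x}{n}\right]\right)
=\sum_{n\leq x}\;\sum_{\substack{d\leq x/n\\ d\,\mid\,[x/n]}}\mu(d)\left[\frac{x}{nd}\right],
\end{equation*}
and the constraint $d\mid[x/n]$ is still present. It is a condition on the \emph{value} of the floor, not on $n$ or on $k=nd$, so the substitution $k=nd$ yields no Dirichlet-convolution weight $\sum_{d\mid k}\mu(d)(\cdots)$: which pairs $(n,d)$ with $nd=k$ survive depends on $x$ in an arithmetically irregular way, and no clean weight "involving $\varphi$ or $\mu$ again" appears. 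Worse, if you simply drop the constraint and sum over all $d\leq[x/n]$, the classical identity $\sum_{d\leq m}\mu(d)\left[m/d\right]=1$ (applied with $m=[x/n]$) collapses the entire double sum to $\sum_{n\leq x}1=[x]=O(x)$ — the unrestricted folded sum has \emph{no} $x\log x$ main term at all. So the divisor condition you call "awkward" carries all of the arithmetic content of the problem; your floor identity does nothing to it, and the subsequent bookkeeping ($\sum_{d>y}\mu(d)/d^{2}=O(1/y)$, tail estimates, etc.) never gets off the ground because the main term has not been isolated. Controlling how often $d\mid[x/n]$ holds, uniformly in $d$, is precisely the difficulty that forced the long exponential-sum analysis in \cite{ZW20}; it is the theorem itself, not a routine step.

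For contrast, the paper keeps the constraint and detects it with additive characters, $\frac{1}{d}\sum_{0\leq a\leq d-1}e^{i2\pi a[x/n]/d}$ (Lemma \ref{lem0802.001}), splitting the sum into $S_{1}$ and $S_{2}$ treated in Lemmas \ref{lem0802.013} and \ref{lem0802.340}; there the crux is the claimed reduction to the case $d\mid n$ (whether that reduction is itself rigorous is a separate question from the one posed to you, and you should not take it as a model to imitate uncritically). Your closing heuristic — that $6/\pi^{2}$ should emerge from $\sum_{d}\mu(d)/d^{2}$ — is tacitly assuming the condition $d\mid[x/n]$ can be traded for $d\mid n$ with density $1/d$; that assumption is exactly what a correct proof must establish, and your proposal contains no mechanism for doing so.
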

\begin{proof} Use the identity $ \varphi(n)=n\sum_{d\mid n}\mu(d)/d $ to rewrite the finite sum, and switch the order of summation:
\begin{eqnarray}\label{eq0800.003}
\sum_{n\leq x}\varphi\left (\left [\frac{x}{n}\right ]\right )
&=& \sum_{n\leq x} \left [\frac{x}{n}\right]\sum_{d\,\mid\, [x/n]}\frac{\mu(d)}{d} \\
&=& \sum_{d\leq x} \frac{\mu(d)}{d}\sum_{\substack{n\leq x\\d\,\mid\, [x/n]}}\left [\frac{x}{n}\right] \nonumber.
\end{eqnarray}

Apply Lemma \ref{lem0802.050} to remove the congruence on the inner sum index, and break it up into two subsums. Specifically,
\begin{eqnarray}\label{eq0800.015}
\sum_{d\leq x} \frac{\mu(d)}{d}\sum_{\substack{n\leq x\\d\,\mid\, [x/n]}}\left [\frac{x}{n}\right]
&=&\sum_{d\leq x} \frac{\mu(d)}{d}\sum_{n\leq x}\left [\frac{x}{n}\right]\cdot \frac{1}{d}\sum_{0\leq a\leq d-1}e^{i2\pi a[x/n]/d} \\
&=&\sum_{d\leq x} \frac{\mu(d)}{d^2}\sum_{n\leq x}\left [\frac{x}{n}\right]+\sum_{d\leq x} \frac{\mu(d)}{d^2}\sum_{n\leq x}\left [\frac{x}{n}\right]\sum_{0< a\leq d-1}e^{i2\pi a[x/n]/d}\nonumber \\
&=&M(x)\quad +\quad E(x)\nonumber.
\end{eqnarray}
The main term $M(x)$ is computed in Lemma \ref{lem0802.100} and the error term $E(x)$ is computed in Lemma \ref{lem0802.200}. Summing these expressions complete the verification.
\end{proof}

It is easy to verify that the expressions $ M(x) $ and $ E(x) $ imply the omega result
\begin{equation}\label{eq0800.014}
\sum_{n\leq x}\varphi\left (\left [\frac{x}{n}\right ]\right )-\frac{6}{\pi^2}x\log x=\Omega_{\pm}(x)
\end{equation}
or a better result.

\section{Auxiliary Results for the Phi Function}\label{s0802}
The detailed and elementary proofs of the preliminary results required in the proof of Theorem \ref{thm0800.001} concerning the Euler phi function $\varphi(n)=\sum_{d\mid n}\mu(d)d$ are recorded in this section.\\

\begin{lem}\label{lem0802.050} Let $ x\geq 1 $ be a large number, and let $1\leq d,n\leq x$ be integers. Then,
\begin{equation}\label{eq0802.010}
\frac{1}{d}\sum_{0\leq a\leq d-1}e^{i2\pi a[x/n]/d} =\left \{\begin{array}{ll}
1 & \text{ if } d\mid [x/n],  \\
0& \text{ if } d\nmid [x/n], \\
\end{array} \right .
\end{equation}
\end{lem}
\begin{proof} The two cases $ d\mid [x/n] $ and $ d\nmid [x/n] $ are easily handled with the basic exponential sum
\begin{equation}\label{eq0802.020}
\sum_{0\leq k\leq q-1}e^{i2\pi km/q} =\left \{\begin{array}{ll}
q & \text{ if } q\mid m,  \\
0& \text{ if } q\nmid m, \\
\end{array} \right .
\end{equation}
where $ m\ne0$, and $q\geq 1  $ are integers.
\end{proof}

\begin{lem}\label{lem0802.100} Let $ x\geq 1 $ be a large number. Then,
\begin{equation}\label{eq0802.110}
\sum_{d\leq x} \frac{\mu(d)}{d^2}\sum_{n\leq x}\left [\frac{x}{n}\right]
= \frac{6}{\pi^2}x\log x+O\left (x\right ) .
\end{equation}
\end{lem}
\begin{proof}Expand the bracket and evaluate the two subsums. Specifically,
\begin{eqnarray}\label{eq0802.115}
\sum_{d\leq x} \frac{\mu(d)}{d^2}\sum_{n\leq x}\left [\frac{x}{n}\right]&=&x\sum_{d\leq x} \frac{\mu(d)}{d^2}\sum_{n\leq x}\frac{1}{n}-\sum_{d\leq x} \frac{\mu(d)}{d^2}\sum_{n\leq x}\left \{\frac{x}{n}\right\}\\
&=&\frac{6}{\pi^2}x\log x+O\left (x\right )\nonumber 
\end{eqnarray}
as claimed
\end{proof}

\begin{lem}\label{lem0802.200} Let $ x\geq 1 $ be a large number. Then,
\begin{equation}\label{eq0802.210}
\sum_{d\leq x} \frac{\mu(d)}{d^2}\sum_{n\leq x}\left [\frac{x}{n}\right]\sum_{0< a\leq d-1}e^{i2\pi a[x/n]/d}
= O\left (x  \right ).
\end{equation}
\end{lem}
\begin{proof} As $n\leq x$ ranges over all its values, each value $m=[x/n]\leq x$ is repeated 
\begin{equation}\label{eq0802.215}
\left [\frac{x}{n}\right]-\left [\frac{x}{n+1}\right]=\frac{x}{n(n+1)}+O(1)
\end{equation}
times. Hence, reordering the triple sum $E(x)$, and substituting \eqref{eq0802.215} yield
\begin{eqnarray}\label{eq0802.220}
E(x)&=&\sum_{n\leq x}\left [\frac{x}{n}\right]\sum_{d\leq x} \frac{\mu(d)}{d^2}\sum_{0< a\leq d-1}e^{i2\pi a[x/n]/d}\\
&=&\sum_{n\leq x}\left (\frac{x}{n(n+1)}+O(1)\right)\sum_{d\leq x} \frac{\mu(d)}{d^2}\sum_{0< a\leq d-1}e^{i2\pi am/d}\nonumber\\
&=&S_0(x)+S_1(x)\nonumber.
\end{eqnarray}
The finite subsums $S_0(x)$, and $S_1(x)$ correspond to the subsets of integers $n\leq x$ such that $d\mid [x/n]$, and $d\nmid [x/n]$, respectively. \\

\textit{Case} I. The set of values $m=[x/n]\leq x$ such that $d\mid m$. Evaluating the indicator function returns
\begin{eqnarray}\label{eq0802.225}
S_0(x)&=&\sum_{n\leq x}\left (\frac{x}{n(n+1)}+O(1)\right)\sum_{\substack{d\leq x\\d\mid m}} \frac{\mu(d)}{d^2}\sum_{0< a\leq d-1}e^{i2\pi am/d}\\
&=&\sum_{n\leq x}\left (\frac{x}{n(n+1)}+O(1)\right)\sum_{\substack{d\leq x\\d\mid m}} \frac{\mu(d)}{d^2}\cdot (d-1)\nonumber\\
&=&\sum_{n\leq x}\left (\frac{x}{n(n+1)}+O(1)\right)\left (-\sum_{\substack{d\leq x\\d\mid m}} \frac{\mu(d)}{d^2}+\sum_{\substack{d\leq x\\d\mid m}} \frac{\mu(d)}{d}\right)\nonumber.
\end{eqnarray}
The first sum from  right in \eqref{eq0802.225} has the upper bound
\begin{equation}\label{eq0802.230}
\sum_{\substack{d\leq x\\d\mid m}} \frac{\mu(d)}{d}=O(1),
\end{equation}
this follows from \cite[Theorem 5]{MV02} and partial summation, or other summation methods. And the second sum from the right in \eqref{eq0802.225} is represented by
\begin{equation}\label{eq0802.235}
\sum_{\substack{d\leq x\\d\mid m}} \frac{\mu(d)}{d^2}=c_{0}(n)+O(1),
\end{equation}
where $|c_{0}(n)|<2$ depends on $n$. Replacing these expressions in \eqref{eq0802.225} returns
\begin{eqnarray}\label{eq0802.240}
S_0(x)&=&\sum_{n\leq x}\left (\frac{x}{n(n+1)}+O(1)\right)\left (c_0(n)+O(1)\right)\nonumber\\
&=&O\left (x\right)\nonumber.
\end{eqnarray}

\textit{Case} II. The set of values $m=[x/n]\leq x$ such that $d\nmid m$. Evaluating the indicator function returns
\begin{eqnarray}\label{eq0802.245}
S_1(x)&=&\sum_{n\leq x}\left (\frac{x}{n(n+1)}+O(1)\right)\sum_{\substack{d\leq x\\d\nmid m}} \frac{\mu(d)}{d^2}\sum_{0< a\leq d-1}e^{i2\pi am/d}\\
&=&\sum_{n\leq x}\left (\frac{x}{n(n+1)}+O(1)\right)\sum_{\substack{d\leq x\\d\nmid m}} \frac{\mu(d)}{d^2}\cdot (-1)\nonumber\\
&=&\sum_{n\leq x}\left (\frac{x}{n(n+1)}+O(1)\right)\left (c_1(n)+O(1)\right)\nonumber\\
&=&O\left (x\right)\nonumber,
\end{eqnarray}
where $|c_1(n)|<2$ depends on $n$.\\

Summing the last two expressions yield $E(x)=S_0(x)+S_1(x)=O\left (x\right)$.   
\end{proof}

\section{Dedekind Psi Function}\label{s0300}
The second result deals with the Dedekind function $ \psi(n)=n\sum_{d\mid n}\mu(d)^2/d $. It is multiplicative and satisfies the growth condition $ \psi(n)\gg n $. The first asymptotic formula for the fractional finite sum of the Dedekind function is given below.
\begin{thm}\label{thm0300.010} If $ x\geq 1 $ is a large number, then, 
\begin{equation}\label{eq0300.010}
\sum_{n\leq x}\psi\left (\left [\frac{x}{n}\right ]\right )= \frac{15}{\pi^2}x\log x+O\left ( x\log \log x\right ).
\end{equation}
\end{thm}
\begin{proof} Use the identity $ \psi(n)=n\sum_{d\mid n}\mu^2(d)/d $ to rewrite the finite sum, and switch the order of summation:
\begin{eqnarray}\label{eq0300.013}
\sum_{n\leq x}\psi\left (\left [\frac{x}{n}\right ]\right )
&=& \sum_{n\leq x} \left [\frac{x}{n}\right]\sum_{d\,\mid\, [x/n]}\frac{\mu^2(d)}{d} \\
&=& \sum_{d\leq x} \frac{\mu^2(d)}{d}\sum_{\substack{n\leq x\\d\,\mid\, [x/n]}}\left [\frac{x}{n}\right] \nonumber.
\end{eqnarray}
Apply Lemma \ref{lem0802.050} to remove the congruence on the inner sum index, and break it up into two subsums. Specifically,
\begin{eqnarray}\label{eq0300.015}
\sum_{d\leq x} \frac{\mu^2(d)}{d}\sum_{\substack{n\leq x\\d\,\mid\, [x/n]}}\left [\frac{x}{n}\right]
&=&\sum_{d\leq x} \frac{\mu^2(d)}{d}\sum_{n\leq x}\left [\frac{x}{n}\right]\cdot \frac{1}{d}\sum_{0\leq a\leq d-1}e^{i2\pi a[x/n]/d} \\
&=&\sum_{d\leq x} \frac{\mu^2(d)}{d^2}\sum_{n\leq x}\left [\frac{x}{n}\right]+\sum_{d\leq x} \frac{\mu^2(d)}{d^2}\sum_{n\leq x}\left [\frac{x}{n}\right]\sum_{0< a\leq d-1}e^{i2\pi a[x/n]/d}\nonumber \\
&=&M_1(x)\quad +\quad E_1(x)\nonumber.
\end{eqnarray}
The main term $M_1(x)$ is computed in Lemma \ref{lem0402.100} and the error term $E_1(x)$ is computed in Lemma \ref{lem0402.200}. Summing these expressions complete the verification.
\end{proof}

It is easy to verify that the subsums $ M_1(x) $ and $ E_1(x) $ imply the omega result
\begin{equation}\label{eq0300.014}
\sum_{n\leq x}\psi\left (\left [\frac{x}{n}\right ]\right )-\frac{15}{\pi^2}x\log x=\Omega_{\pm}(x)
\end{equation}
or a better result. A sketch of the standard proof for the average order
\begin{equation}\label{eq0300.016}
\sum_{n\leq x}\psi(n)= \frac{15}{\pi^2}x^2+O\left (  x\log x\right ),
\end{equation}
appears in \cite[Exercise 13, p.\ 71]{AP76}.

\section{Auxiliary Results for the Psi Function}\label{s0402}
The detailed and elementary proofs of the preliminary results required in the proof of Theorem \ref{thm0300.010} concerning the Dedekind psi function $\psi(n)=\sum_{d\mid n}\mu^2(d)d$ are recorded in this section.

\begin{lem}\label{lem0402.100} Let $ x\geq 1 $ be a large number. Then,
\begin{equation}\label{eq0402.110}
\sum_{d\leq x} \frac{\mu^2(d)}{d^2}\sum_{n\leq x}\left [\frac{x}{n}\right]
= \frac{15}{\pi^2}x\log x+O\left (x\right ) .
\end{equation}
\end{lem}
\begin{proof}Expand the bracket and evaluate the two subsums. Specifically,
\begin{eqnarray}\label{eq0402.115}
\sum_{d\leq x} \frac{\mu^2(d)}{d^2}\sum_{n\leq x}\left [\frac{x}{n}\right]&=&x\sum_{d\leq x} \frac{\mu^2(d)}{d^2}\sum_{n\leq x}\frac{1}{n}-\sum_{d\leq x} \frac{\mu^2(d)}{d^2}\sum_{n\leq x}\left \{\frac{x}{n}\right\}\\
&=&\frac{15}{\pi^2}x\log x+O\left (x\right )\nonumber 
\end{eqnarray}
as claimed
\end{proof}

\begin{lem}\label{lem0402.200} Let $ x\geq 1 $ be a large number. Then,
\begin{equation}\label{eq0402.210}
\sum_{d\leq x} \frac{\mu^2(d)}{d^2}\sum_{n\leq x}\left [\frac{x}{n}\right]\sum_{0< a\leq d-1}e^{i2\pi a[x/n]/d}
= O\left (x  \log \log x\right ).
\end{equation}
\end{lem}
\begin{proof} As $n\leq x$ ranges over all its values, each value $m=[x/n]\leq x$ is repeated 
\begin{equation}\label{eq0402.215}
\left [\frac{x}{n}\right]-\left [\frac{x}{n+1}\right]=\frac{x}{n(n+1)}+O(1)
\end{equation}
times. Hence, reordering the triple sum $E(x)$, and substituting \eqref{eq0402.215} yield
\begin{eqnarray}\label{eq0402.220}
E_1(x)&=&\sum_{n\leq x}\left [\frac{x}{n}\right]\sum_{d\leq x} \frac{\mu^2(d)}{d^2}\sum_{0< a\leq d-1}e^{i2\pi a[x/n]/d}\\
&=&\sum_{n\leq x}\left (\frac{x}{n(n+1)}+O(1)\right)\sum_{d\leq x} \frac{\mu^2(d)}{d^2}\sum_{0< a\leq d-1}e^{i2\pi am/d}\nonumber\\
&=&S_{10}(x)+S_{11}(x)\nonumber.
\end{eqnarray}
The finite subsums $S_{10}(x)$, and $S_{11}(x)$ correspond to the subsets of integers $n\leq x$ such that $d\mid [x/n]$, and $d\nmid [x/n]$, respectively. \\

\textit{Case} I. The set of values $m=[x/n]\leq x$ such that $d\mid m$. Evaluating the indicator function returns
\begin{eqnarray}\label{eq0402.225}
S_{10}(x)&=&\sum_{n\leq x}\left (\frac{x}{n(n+1)}+O(1)\right)\sum_{\substack{d\leq x\\d\nmid m}} \frac{\mu^2(d)}{d^2}\sum_{0< a\leq d-1}e^{i2\pi am/d}\\
&=&\sum_{n\leq x}\left (\frac{x}{n(n+1)}+O(1)\right)\sum_{\substack{d\leq x\\d\mid m}} \frac{\mu^2(d)}{d^2}\cdot (d-1)\nonumber\\
&=&\sum_{n\leq x}\left (\frac{x}{n(n+1)}+O(1)\right)\left (-\sum_{\substack{d\leq x\\d\mid m}} \frac{\mu^2(d)}{d^2}+\sum_{\substack{d\leq x\\d\mid m}} \frac{\mu^2(d)}{d}\right)\nonumber.
\end{eqnarray}
The first sum from  right in \eqref{eq0402.225} has the upper bound
\begin{equation}\label{eq0402.230}
\sum_{\substack{d\leq x\\d\mid m}} \frac{\mu^2(d)}{d}\leq \frac{\psi(m)}{m}=O(\log \log x),
\end{equation}
since $m\leq x$. And the second sum from the right in \eqref{eq0402.225} is represented by
\begin{equation}\label{eq0402.235}
\sum_{\substack{d\leq x\\d\mid m}} \frac{\mu(d)}{d^2}=c_{0}(n)+O(1),
\end{equation}
where $|c_{10}(n)|<2$ depends on $n$. Replacing these expressions in \eqref{eq0402.225} returns
\begin{eqnarray}\label{eq0402.240}
S_{10}(x)&=&\sum_{n\leq x}\left (\frac{x}{n(n+1)}+O(1)\right)\left (c_{10}(n)+O(\log \log x)\right)\\
&=&O\left (x\log \log x\right)\nonumber.
\end{eqnarray}

\textit{Case} II. The set of values $m=[x/n]\leq x$ such that $d\nmid m$. Evaluating the indicator function returns 
\begin{eqnarray}\label{eq0402.245}
S_{11}(x)&=&\sum_{n\leq x}\left (\frac{x}{n(n+1)}+O(1)\right)\sum_{\substack{d\leq x\\d\nmid m}} \frac{\mu^2(d)}{d^2}\sum_{0< a\leq d-1}e^{i2\pi am/d}\\
&=&\sum_{n\leq x}\left (\frac{x}{n(n+1)}+O(1)\right)\sum_{\substack{d\leq x\\d\nmid m}} \frac{\mu^2(d)}{d^2}\cdot (-1)\nonumber\\
&=&\sum_{n\leq x}\left (\frac{x}{n(n+1)}+O(1)\right)\left (c_{11}(n)+O(1)\right)\nonumber\\
&=&O\left (x\right)\nonumber,
\end{eqnarray}
where $|c_{11}(n)|<2$ depends on $n$.\\

Summing the last two expressions yield $E_1(x)=S_{10}(x)+S_{11}(x)=O\left (x\log \log x\right)$.   
\end{proof}

\section{Sum of Divisors Function}\label{s0500}
The third result deals with the sum of divisors function $ \sigma(n)=n\sum_{d\mid n}1/d $. It is multiplicative and satisfies the growth condition $ \sigma(n)\gg n $. The first asymptotic formula for the fractional sum of divisor function is given below.
\begin{thm}\label{thm0500.010} If $ x\geq 1 $ is a large number, then, 
\begin{equation}\label{eq0500.010}
\sum_{n\leq x}\sigma\left (\left [\frac{x}{n}\right ]\right )= \frac{\pi^2}{6}x\log x+O\left ( x\log \log x\right ).
\end{equation}
\end{thm}
\begin{proof} Use the identity $ \sigma(n)=n\sum_{d\mid n}1/d $ to rewrite the finite sum, and switch the order of summation:
\begin{eqnarray}\label{eq0500.003}
\sum_{n\leq x}\sigma\left (\left [\frac{x}{n}\right ]\right )
&=& \sum_{n\leq x} \left [\frac{x}{n}\right]\sum_{d\,\mid\, [x/n]}\frac{1}{d} \\
&=& \sum_{d\leq x} \frac{1}{d}\sum_{\substack{n\leq x\\d\,\mid\, [x/n]}}\left [\frac{x}{n}\right] \nonumber.
\end{eqnarray}
Apply Lemma \ref{lem0802.050} to remove the congruence on the inner sum index, and break it up into two subsums. Specifically,
\begin{eqnarray}\label{eq0500.015}
\sum_{d\leq x} \frac{1}{d}\sum_{\substack{n\leq x\\d\,\mid\, [x/n]}}\left [\frac{x}{n}\right]
&=&\sum_{d\leq x} \frac{1}{d}\sum_{n\leq x}\left [\frac{x}{n}\right]\cdot \frac{1}{d}\sum_{0\leq a\leq d-1}e^{i2\pi a[x/n]/d} \\
&=&\sum_{d\leq x} \frac{1}{d^2}\sum_{n\leq x}\left [\frac{x}{n}\right]+\sum_{d\leq x} \frac{1}{d^2}\sum_{n\leq x}\left [\frac{x}{n}\right]\sum_{0< a\leq d-1}e^{i2\pi a[x/n]/d}\nonumber \\
&=&M_2(x)\quad +\quad E_2(x)\nonumber.
\end{eqnarray}
The main term $M_2(x)$ is computed in Lemma \ref{lem0502.100} and the error term $E_2(x)$ is computed in Lemma \ref{lem0502.200}. Summing these expressions complete the verification.
\end{proof}

It is easy to verify that the subsums $ M_2(x) $ and $ E_2(x) $ imply the omega result
\begin{equation}\label{eq0800.014}
\sum_{n\leq x}\sigma\left (\left [\frac{x}{n}\right ]\right )-\frac{\pi^2}{6}x\log x=\Omega_{\pm}(x)
\end{equation}
or a better result. The standard proof for the average order
\begin{equation}\label{eq0800.016}
\sum_{n\leq x}\sigma(n)= \frac{\pi^2}{12}x^2+O\left (  x\log x\right ),
\end{equation}
appears in \cite[Theorem 3.4]{AP76}.

\section{Auxiliary Results for the Sigma Function}\label{s0502}
The detailed and elementary proofs of the preliminary results required in the proof of Theorem \ref{thm0500.010} concerning the sum of divisor function $\sigma(n)=\sum_{d\mid n}d$ are recorded in this section.

\begin{lem}\label{lem0502.100} Let $ x\geq 1 $ be a large number. Then,
\begin{equation}\label{eq0502.110}
\sum_{d\leq x} \frac{1}{d^2}\sum_{n\leq x}\left [\frac{x}{n}\right]
= \frac{\pi^2}{6}x\log x+O\left (x\right ) .
\end{equation}
\end{lem}
\begin{proof}Expand the bracket and evaluate the two subsums. Specifically,
\begin{eqnarray}\label{eq0502.115}
\sum_{d\leq x} \frac{1}{d^2}\sum_{n\leq x}\left [\frac{x}{n}\right]&=&x\sum_{d\leq x} \frac{1}{d^2}\sum_{n\leq x}\frac{1}{n}-\sum_{d\leq x} \frac{1}{d^2}\sum_{n\leq x}\left \{\frac{x}{n}\right\}\\
&=&\frac{\pi^2}{6}x\log x+O\left (x\right )\nonumber 
\end{eqnarray}
as claimed
\end{proof}

\begin{lem}\label{lem0502.200} Let $ x\geq 1 $ be a large number. Then,
\begin{equation}\label{eq0502.210}
\sum_{d\leq x} \frac{1}{d^2}\sum_{n\leq x}\left [\frac{x}{n}\right]\sum_{0< a\leq d-1}e^{i2\pi a[x/n]/d}
= O\left (x  \log \log x\right ).
\end{equation}
\end{lem}
\begin{proof} As $n\leq x$ ranges over all its values, each value $m=[x/n]\leq x$ is repeated 
\begin{equation}\label{eq0502.215}
\left [\frac{x}{n}\right]-\left [\frac{x}{n+1}\right]=\frac{x}{n(n+1)}+O(1)
\end{equation}
times. Hence, reordering the triple sum $E_2(x)$, and substituting \eqref{eq0502.215} yield
\begin{eqnarray}\label{eq0502.220}
E_2(x)&=&\sum_{n\leq x}\left [\frac{x}{n}\right]\sum_{d\leq x} \frac{1}{d^2}\sum_{0< a\leq d-1}e^{i2\pi a[x/n]/d}\\
&=&\sum_{n\leq x}\left (\frac{x}{n(n+1)}+O(1)\right)\sum_{d\leq x} \frac{1}{d^2}\sum_{0< a\leq d-1}e^{i2\pi am/d}\nonumber\\
&=&S_{20}(x)+S_{21}(x)\nonumber.
\end{eqnarray}
The finite subsums $S_{20}(x)$, and $S_{21}(x)$ correspond to the subsets of integers $n\leq x$ such that $d\mid [x/n]$, and $d\nmid [x/n]$, respectively. \\

\textit{Case} I. The set of values $m=[x/n]\leq x$ such that $d\mid m$. Evaluating the indicator function returns
\begin{eqnarray}\label{eq0502.225}
S_{20}(x)&=&\sum_{n\leq x}\left (\frac{x}{n(n+1)}+O(1)\right)\sum_{\substack{d\leq x\\d\mid m}} \frac{1}{d^2}\sum_{0< a\leq d-1}e^{i2\pi am/d}\\
&=&\sum_{n\leq x}\left (\frac{x}{n(n+1)}+O(1)\right)\sum_{\substack{d\leq x\\d\mid m}} \frac{1}{d^2}\cdot (d-1)\nonumber\\
&=&\sum_{n\leq x}\left (\frac{x}{n(n+1)}+O(1)\right)\left (-\sum_{\substack{d\leq x\\d\mid m}} \frac{1}{d^2}+\sum_{\substack{d\leq x\\d\mid m}} \frac{1}{d}\right)\nonumber.
\end{eqnarray}
The first sum from the right in \eqref{eq0502.225} has the upper bound
\begin{equation}\label{eq0502.230}
\sum_{\substack{d\leq x\\d\mid m}} \frac{1}{d}\leq \frac{\sigma(m)}{m}=O(\log \log x),
\end{equation}
since $m\leq x$. And the second sum from the right in \eqref{eq0502.225} is represented by 
\begin{equation}\label{eq0502.235}
\sum_{\substack{d\leq x\\d\mid m}} \frac{\mu(d)}{d^2}=c_{0}(n)+O(1),
\end{equation}
where $|c_{20}(n)|<2$ depends on $n$. Replacing these expressions in \eqref{eq0502.225} returns
\begin{eqnarray}\label{eq0502.240}
S_{20}(x)
&=&\sum_{n\leq x}\left (\frac{x}{n(n+1)}+O(1)\right)\left (c_{10}(n)+O(\log \log x)\right)\\
&=&O\left (x\log \log x\right)\nonumber.
\end{eqnarray}

\textit{Case} II. The set of values $m=[x/n]\leq x$ such that $d\nmid m$. Evaluating the indicator function returns
\begin{eqnarray}\label{eq0502.245}
S_{21}(x)&=&\sum_{n\leq x}\left (\frac{x}{n(n+1)}+O(1)\right)\sum_{\substack{d\leq x\\d\nmid m}} \frac{1}{d^2}\sum_{0< a\leq d-1}e^{i2\pi am/d}\\
&=&\sum_{n\leq x}\left (\frac{x}{n(n+1)}+O(1)\right)\sum_{\substack{d\leq x\\d\nmid m}} \frac{1}{d^2}\cdot (-1)\nonumber\\
&=&\sum_{n\leq x}\left (\frac{x}{n(n+1)}+O(1)\right)\left (c_{21}(n)+O(1)\right)\nonumber\\
&=&O\left (x\right)\nonumber,
\end{eqnarray}
where $|c_{21}(n)|<2$ depends on $n$. \\ 

Summing the last two expressions yield $E_2(x)=S_{20}(x)+S_{21}(x)=O\left (x\log \log x\right)$.   
\end{proof}

\section{Numerical Data}\label{s0505}
Small numerical tables were generated by an online computer algebra system, the range of numbers $ x\leq 10^5 $ is limited by the wi-fi bandwidth. The error terms are defined by
\begin{equation}\label{eq0505.033}
E_1(x)=\sum_{n\leq x}\varphi\left (\left [\frac{x}{n}\right ]\right )- \frac{6}{\pi^2}x\log x,
\end{equation}
and  
\begin{equation}\label{eq0505.035}
E_2(x)=\sum_{n\leq x}\sigma\left (\left [\frac{x}{n}\right ]\right )- \frac{\pi^2}{6}x\log x,
\end{equation}
respectively. All the calculations are within the predicted ranges $ E_i(x)=O(x) $.
\begin{table}[h!]
\centering
\caption{Numerical Data For $\sum_{n\leq x}\varphi([x/n])$.} \label{t0505.003}
\begin{tabular}{l|l|r| r}
$x$&$\sum_{n\leq x}\varphi([x/n])$&$6\pi^{-2}x\log x$&Error $E_1(x)$\\
\hline
10&$17$&   $14.00$   &$3.00$\\
100&$275$&  $279.96$   &$-4.96$\\
1000&$4053$&   $4199.41$   &$146.41$\\
10000&$52201$&  $55992.16$   &$-3791.16$\\
100000&$673929$&   $699901.94$   &$-25972.94$\\
\end{tabular}
\end{table}

\begin{table}[h!]
\centering
\caption{Numerical Data For $\sum_{n\leq x}\sigma([x/n])$.} \label{t0505.009}
\begin{tabular}{l|l|r| r}
$x$&$\sum_{n\leq x}\sigma([x/n])$&$6^{-1}\pi^{2}x\log x$&Error $E_2(x)$\\
\hline
10&$39$&   $37.88$   &$1.12$\\
100&$804$&  $757.52$   &$46.48$\\
1000&$12077$&   $11362.80$   &$714.20$\\
10000&$167617$&  $151504.03$   &$16112.97$\\
100000&$2033577$&   $1893800.33$   &$139776.67$\\
\end{tabular}
\end{table}

\currfilename.\\

\end{document}